\newcommand{\Z}{\mathbb{Z}}
\newcommand{\N}{\mathbb{N}}
\newcommand{\Q}{\mathbb{Q}}
\newcommand{\calA}{\mathcal{A}}
\newcommand{\calE}{\mathcal{E}}
\newcommand{\calU}{\mathcal{U}}
\newcommand{\calP}{\mathcal{P}}
\newtheorem{Lem}{Lemma}
\newtheorem{Prop}{Proposition}
\newcommand{\qf}[1]{\langle #1 \rangle}
\newcommand{\gen}{\operatorname{gen}}
\newcommand{\ord}{\operatorname{ord}}
\title[Integral quadratic forms avoiding arithmetic progressions]%
{Integral quadratic forms avoiding arithmetic progressions}%
\author[A.G. Earnest]{A.G. Earnest}
\address{Department of Mathematics, Southern Illinois University, Carbondale, IL, 62901, U.S.A.}
\email{aearnest@siu.edu}
\author[Ji Young Kim]{Ji Young Kim}
\address{Department of Mathematical Sciences, Seoul National University,
1 Gwanak-ro, Gwanak-Gu, Seoul, 08826, Korea}%
\email{jykim98@snu.ac.kr}
\thanks{The second named author was supported by Basic Science Research Program through the National Research Foundation
of Korea (NRF) funded by the Ministry of Education (NRF-2017R1D1A1B03028905).}
\subjclass[2010]{Primary 11E25; Secondary 11B25 11E12 11E20}%
\date{September 18, 2019}
\begin{document}

\begin{abstract}

For every positive integer $k$, it is shown that there exists a positive definite diagonal quaternary integral quadratic form that represents all positive integers except for precisely those which lie in $k$ arithmetic progressions. For $k=1$, all forms with this property are determined.

\end{abstract}

\maketitle

\section{Introduction}

In a recent paper \cite{kW-2018}, Williams posed and answered a variety of interesting questions regarding the integers represented by diagonal positive definite quaternary integral quadratic forms.
Among them are several which relate to the avoidance of arithmetic progressions by the represented value set of a quadratic form of this type.
For example, can such a form represent all positive integers except for those in an arithmetic progression? What about two arithmetic progressions?
Both of these questions are answered in the affirmative in \cite{kW-2018} by producing specific forms having these properties.
The proofs given there are elementary and are based on the classical theorem of Lagrange on sums of four squares.

It is natural to extend this line of inquiry and ask whether it is possible for a diagonal positive definite quaternary integral quadratic form to represent all positive integers except for those in exactly $k$ arithmetic progressions, for any positive integer $k$.
In this paper, we will show that this is indeed possible for all positive integers $k$.
In the process, we will investigate more generally, for positive definite integral quadratic forms of rank at least $4$, when the represented value set of such a form can avoid precisely a union of arithmetic progressions.
We explain how this phenomenon of avoiding arithmetic progressions arises from local obstructions to representation, and point out the connection with the theory of forms that are regular, in the sense of Dickson \cite{leD-26}.

\section{Preliminaries}

Throughout the remainder of the paper we will adopt the modern geometric language of quadratic spaces and lattices. Our terminology and notation will follow that of \cite{otO-1973}, unless otherwise indicated. The symbols $\Q$, $\Z$ and $\N$ will denote the field of rational numbers, the ring of rational integers, and the set of natural numbers, respectively. Throughout the paper, the term \textit{lattice} will always refer to a finitely generated submodule $L$ on a finite dimensional quadratic space $(V,Q)$ over $\Q$. Moreover, it will be assumed throughout that the scale of any lattice $L$ under consideration (that is, the fractional ideal generated by the set $\{B(x,y):x,y \in L\}$, where $B$ is the symmetric bilinear form on $V$ for which $Q(v)=B(v,v)$ for $v\in V$) is equal to $\Z$. So the determinant of the Gram matrix of $L$ with respect to any basis is an integer, which will be denoted by $dL$. The notation $L\cong \qf{a_1,\ldots,a_n}$ means that $L$ has an orthogonal basis with respect to which the Gram matrix of $L$ is the diagonal matrix with diagonal entries $a_1,\ldots,a_n$.

The represented value set for the lattice $L$ is $Q(L)=\{Q(v):v \in L\}$. So an integer $a$ is represented by $L$ if and only if $a\in Q(L)$. The lattice $L$ is positive definite if $Q(v)>0$ for all $0\neq v \in L$. For a positive definite lattice $L$ we will use the notation $\calE(L)$ to denote the excluded set $\N\setminus Q(L)$.

For a prime $p$, $\Z_p$ will denote the ring of $p$-adic integers and $\Z_p^{\times}$ the group of units of $\Z_p$. So for each $\alpha \in \Z_p$, there exists a unit $\alpha_0 \in \Z_p^{\times}$ such that $\alpha = p^{\ord_p\alpha}\alpha_0$. For a lattice $L$, let $\gen L$ be the genus of $L$, and let $Q(\gen L) = \{a\in \Z : a \in Q(K) \text{ for some } K\in \gen L\}$. Then $a \in Q(\gen L)$ if and only if $a \in Q(L_p)$ for all primes $p$. For a positive definite lattice $L$, let $\calE(L) = \N\setminus Q(L)$, $\calE(\gen L)= \N\setminus Q(\gen L)$ and $\calE(L_p)=\Z_p\setminus Q(L_p)$. Then $Q(L) \subseteq Q(\gen L)$ and $\calE(\gen L) \subseteq \calE(L)$. In terminology first introduced by Dickson \cite{leD-26}, a positive definite lattice $L$ is called \textit{regular} if $Q(L) = Q(\gen L)$ (or, equivalently, $\calE(L) = \calE(\gen L)$) .

\section{Arithmetic progressions and local obstructions}

For positive integers $a<m$, let $\calA_{a,m}$ denote the arithmetic progression $\{ a + m x : x \in \N \cup \{0\} \}$. For the purpose of this paper, we will refer to the arithmetic progression $\calA_{a,m}$ as \textit{admissible} if
    \[
    \ord_p a < \ord_p m
    \]
for all primes $p$ such that $p \mid m$. Our goal in this section and the next will be to characterize those positive definite lattices for which $\calE(L)$ is a union of finitely many admissible arithmetic progressions. In the final section of the paper, we will return to the question of how many arithmetic progressions are required in such an expression. For this question, some care must be taken in counting since, for example,
    \[
    \calA_{3,4} = \calA_{3,8}\cup \calA_{7,8} = \calA_{3,16}\cup \calA_{7,16} \cup \calA_{11,16}\cup \calA_{15,16},
    \]
and so on. To make the counting precise, for any $k \in \N$ we will say that a positive definite lattice $L$ \textit{avoids exactly $k$ arithmetic progressions} if $\calE (L)$ is a union of $k$ admissible arithmetic progressions, but no such expression exists for a union of fewer than $k$ arithmetic progressions.

Now let $L$ be a lattice with rank $L \geq 4$ and let $p$ be a prime. Note first that if $p\nmid dL$, then $L_p$ is unimodular and $Q(L_p)=\Z_p$. If $p\mid dL$, let $r:=r(p)$ be the positive integer such that $p^r\Z_p$ is the scale of the final component in a Jordan splitting of $L_p$. Then, using the condition that rank $L \geq 4$, it follows from the criteria in \cite[Theorem 1]{otO-1958} if $p$ is odd, or \cite[Theorem 3]{otO-1958} if $p=2$, that $p^r\Z_p \subseteq Q(L_p)$.

As the elements of $\Z_p$ of a given order lie in exactly two squareclasses modulo $(\Z_p^{\times})^2$ if $p$ is odd, and exactly four squareclasses modulo $(\Z_2^{\times})^2$ if $p=2$, it then follows that for each prime $p$ there are at most finitely many squareclasses modulo $(\Z_p^{\times})^2$ that fail to be represented by $L_p$. Now consider one such squareclass $a(\Z_p^{\times})^2$ with $a\in \Z$ and write $a=p^{\ell}b$, where $p\nmid b$. Let $\overline{b}$ denote the residue of $b$ modulo $p$ (or modulo $8$ when $p=2$). Consider first the case of an odd prime $p$. Then, by the Local Square Theorem \cite[63:1]{otO-1973}, $b$ is a square (nonsquare, respectively) modulo $p$ if and only if $\overline{b}$ lies in a system $\{ \varepsilon_1,\ldots,\varepsilon_{\frac{p-1}{2}}\}$ of quadratic residues (nonresidues, respectively) modulo $p$. That is,
    \[
    \calA_{p^{\ell}\varepsilon_1,p^{\ell+1}} \cup\cdots\cup \calA_{p^{\ell}\varepsilon_{\frac{p-1}{2}},p^{\ell+1}} = a(\Z_p^{\times})^2\cap \Z.
    \]
So, whenever $a\in \calE(L_p)$, all elements of this union are excluded from representation by $L_p$, hence from representation by $\gen L$. That is,
    \[
    \calA_{p^{\ell}\varepsilon_1,p^{\ell+1}} \cup\cdots\cup \calA_{p^{\ell}\varepsilon_{\frac{p-1}{2}},p^{\ell+1}} \subseteq \calE(\gen L).
    \]
When $p=2$, we have $\overline{b} \in \{1,3,5,7\}$ and
    \[
    \calA_{2^{\ell}\overline{b},2^{(\ell + 3)}} = a(\Z_2^{\times})^2\cap \Z.
    \]
Repeating this process for each of the finitely many primes $p$ dividing $dL$ and each of the excluded squareclasses modulo $p$, we ultimately obtain a union $\calU$ of admissible arithmetic progressions which is contained in $\calE(\gen L)$ and which accounts for all of the local obstructions to representation. Since any positive integer not in $Q(\gen L)$ must be excluded by failure to be represented by $L_p$ for at least one such prime $p$, in fact we then have $\calE(\gen L)=\calU$.

The preceding discussion is summarized in the following statement:

\begin{Prop}
If $L$ is a positive definite lattice with rank $L \geq 4$, then $\calE(\gen L)$ is a finite union of admissible arithmetic progressions.
\end{Prop}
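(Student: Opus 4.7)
The plan is to prove that $\calE(\gen L)$ decomposes as a finite union of admissible arithmetic progressions by working one prime at a time and then taking a (finite) union. The guiding identity is
\[
\calE(\gen L) \;=\; \bigcup_{p} \bigl(\calE(L_p)\cap\N\bigr),
\]
so the task reduces to (i) showing that only finitely many primes contribute, (ii) showing each local excluded set $\calE(L_p)$ meets $\Z$ in finitely many admissible arithmetic progressions, and (iii) translating squareclass obstructions modulo $(\Z_p^\times)^2$ into arithmetic progressions in the required form.

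For step (i), I would note that if $p\nmid dL$ then $L_p$ is unimodular of rank $\geq 4$, and classical local theory gives $Q(L_p)=\Z_p$; hence only the finitely many primes dividing $dL$ contribute to $\calE(\gen L)$. For step (ii), fix such a prime $p$ and let $r=r(p)$ be the scale exponent of the last Jordan component of $L_p$. The key structural input is that, because $\rank L \geq 4$, the Hasse–Minkowski–type local representation criteria (Theorem 1 or Theorem 3 of O'Meara's \cite{otO-1958}, depending on whether $p$ is odd or $p=2$) guarantee $p^r\Z_p\subseteq Q(L_p)$. Since there are only finitely many squareclasses modulo $(\Z_p^\times)^2$ of elements of order at most $r$ in $\Z_p$ (two per order when $p$ is odd, four when $p=2$), only finitely many squareclasses can be excluded by $L_p$.

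For step (iii), I would unpack what a single excluded squareclass looks like inside $\Z$. Writing a representative as $a=p^\ell b$ with $p\nmid b$, the Local Square Theorem \cite[63:1]{otO-1973} identifies $a(\Z_p^\times)^2\cap\Z$ with the residues in $b$'s squareclass modulo $p$ (or modulo $8$ when $p=2$), lifted into $\Z$ via the common difference $p^{\ell+1}$ (resp.\ $2^{\ell+3}$). This yields $(p-1)/2$ progressions $\calA_{p^\ell \varepsilon_i,\,p^{\ell+1}}$ for $p$ odd, and a single progression $\calA_{2^\ell\overline{b},\,2^{\ell+3}}$ for $p=2$. In every case $\ord_p$ of the first term is strictly less than $\ord_p$ of the common difference, so the progressions are admissible in the sense defined above.

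Finally, assembling the pieces over all primes $p\mid dL$ and all excluded squareclasses (a finite double union) produces a set $\calU$ of admissible arithmetic progressions contained in $\calE(\gen L)$. The reverse inclusion is automatic: any $n\in\calE(\gen L)$ is by definition excluded by some $L_p$, hence lies in one of the listed squareclasses and so in $\calU$. The main obstacle I anticipate is cleanly invoking the two cases of O'Meara's local representation theorem to secure $p^r\Z_p\subseteq Q(L_p)$ — everything else is squareclass bookkeeping — but for rank $\geq 4$ this inclusion is exactly what those theorems are designed to deliver, so no extra work beyond citing them should be required.
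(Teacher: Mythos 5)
Your proposal is correct and follows essentially the same route as the paper: finiteness via $Q(L_p)=\Z_p$ for $p\nmid dL$ and $p^r\Z_p\subseteq Q(L_p)$ from O'Meara's local representation theorems (using rank $\geq 4$), then the Local Square Theorem to convert each excluded squareclass into the admissible progressions $\calA_{p^\ell\varepsilon_i,p^{\ell+1}}$ (odd $p$) or $\calA_{2^\ell\overline{b},2^{\ell+3}}$ ($p=2$), and finally the two-sided inclusion for the finite union $\calU$. The only cosmetic slip is calling O'Meara's local criteria ``Hasse--Minkowski--type,'' which is a misnomer but does not affect the argument.
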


\section{Connection to regularity}

It follows immediately from Proposition 1 that if the positive definite lattice $L$ is regular, then $\calE(L)$ is a finite union of admissible arithmetic progressions. The primary goal of this section is to prove that the converse is also true. In order to do this, we will first prove two lemmas regarding the arithmetic progressions contained in $Q(\gen L)$ and $\calE(L)$

\begin{Lem}
Let $L$ be a positive definite lattice with rank $L \geq 4$ and let $\calA_{a,m}$ be an admissible arithmetic progression.
If $\calA_{a,m} \subseteq Q(\gen L)$, then $\calA_{a,m} \cap Q(L) \not= \emptyset$.
\end{Lem}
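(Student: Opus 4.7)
The plan is to produce an integer in $\calA_{a,m}$ with a very rigid prime factorization, and then to invoke a classical almost-representation theorem for positive definite lattices of rank $\geq 4$.

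First I would unpack the admissibility condition. Setting $d := \gcd(a,m)$, the hypothesis $\ord_p a < \ord_p m$ for every $p \mid m$ gives $d = \prod_{p \mid m} p^{\ord_p a}$; then $a/d$ is supported only on primes outside $m$ while $m/d$ is supported only on primes dividing $m$, so $a/d$ and $m/d$ are coprime positive integers. Thus $\calA_{a,m} = d \cdot \calA_{a/d,\, m/d}$, and Dirichlet's theorem on primes in arithmetic progressions supplies infinitely many primes $q \equiv a/d \pmod{m/d}$. For each such $q$, the integer $n := dq$ lies in $\calA_{a,m}$; and once $q$ is chosen larger than every prime dividing $dL$, the $p$-adic order of $n$ is bounded (by $\ord_p d$) at every $p \mid dL \cdot m$ and is at most $1$ at every other prime.

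Next I would appeal to the Tartakowsky--Kloosterman type theorem: for a positive definite lattice $L$ of rank $\geq 4$ there is a threshold $N_0$ so that every $n > N_0$ in $Q(\gen L)$ whose $p$-adic valuations at the primes dividing $dL$ are suitably bounded is already in $Q(L)$. For rank $\geq 5$ this is Tartakowsky's classical theorem, with no valuation hypothesis at all. For rank $=4$ it is Kloosterman's theorem and its refinements, which hold up to finitely many \emph{spinor exceptional} squareclasses of the form $\{t k^2 : k \in \N\}$ for certain positive integers $t$. Our candidate $n = dq$ contains $q$ with multiplicity one, so choosing $q$ larger than every such $t$ and every prime dividing any such $t$ guarantees $n$ is not of this form. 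Combined with the hypothesis $n \in Q(\gen L)$, this yields $n \in Q(L)$, as desired.

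The principal difficulty lies in the rank $=4$ case, namely in verifying that the combination of admissibility with the rigid factorization $n = dq$ genuinely circumvents all of the finitely many spinor exceptional classes and any other rank-$4$ obstructions to passing from $\gen L$ to $L$. For rank $\geq 5$ the argument is essentially immediate from Tartakowsky, with no need for the prime-factorization refinement at all. The role of admissibility is essential throughout: it is precisely what permits factoring out $d$ and applying Dirichlet to a coprime progression, and thereby produces an $n \in \calA_{a,m}$ whose local behavior is rigid enough to be admitted by the rank-$4$ representation theorem.
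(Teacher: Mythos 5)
Your proposal is correct and follows essentially the same route as the paper: factor out $d=\gcd(a,m)$, use Dirichlet to produce infinitely many elements $dq\in\calA_{a,m}$ with $q$ prime (hence with bounded $p$-adic valuations at all primes), and then apply the Kloosterman-type theorem of Ross--Pall for rank $\geq 4$ to conclude that the sufficiently large such elements lie in $Q(L)$. Your extra detour through spinor exceptional squareclasses in the rank-$4$ case is superfluous (such exceptions are a ternary phenomenon, and the cited Ross--Pall result already covers rank $4$ with only the bounded-valuation hypothesis), but it is handled correctly and does not affect the argument.
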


\begin{proof}
Let $\gcd(a,m) = d$ and write $a=da'$, $m=dm'$, $\gcd(a',m') =1$.
Then $\calA_{a,m} = d ~\calA_{a',m'}$. Since $\gcd(a',m') =1$, by Dirichlet's theorem on primes in an arithmetic progression, the set
$\calP' = \{ \lambda \in \calA_{a',m'} : \lambda \text{ is prime}\}$ is infinite. Therefore the set $\calP = \{ d\lambda : \lambda \in \calP \}$
is an infinite subset of $\calA_{a,m}$ all of whose elements are divisible by bounded powers of all primes.
Since $\calP \subseteq Q(\gen L)$, it follows from \cite[Theorem 1]{aeR-gP-1946} that all sufficiently large elements of $\calP$ are represented by $L$, which yields the desired result.
\end{proof}

\noindent \textit{Remark:} If rank $L \geq 5$ (or if rank $L = 4$ and $L_p$ is isotropic for all primes $p$), then the conclusion above follows directly from a theorem of Tartakowsky \cite{vT-1929} (see, e.g., \cite[Theorem 1.6, p.204]{jwsC-1978} or \cite[Theorem 6.6.2]{yK-1993}), without invoking Dirichlet's theorem.

\begin{Lem}
Let $L$ be a positive definite lattice with rank $L \geq 4$ and let $\calA_{a,m}$ be an admissible arithmetic progression.
If $\calA_{a,m} \subseteq \calE(L)$, then $a \in \calE(L_p)$ for some prime $p \mid m$.
\end{Lem}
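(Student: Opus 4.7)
The plan is to prove the contrapositive: assume $a \in Q(L_p)$ for every prime $p \mid m$, and exhibit an element of $\calA_{a,m}$ that is represented by $L$. The natural target is Lemma 1, so the aim is to refine $\calA_{a,m}$ to an admissible subprogression $\calA_{a^*, m^*}$ (with $a^* \equiv a \pmod{m}$ and $m \mid m^*$) that is contained in $Q(\gen L)$.

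The subprogression is built by analyzing each prime $p$ dividing $dL$ separately and combining the conditions via the Chinese Remainder Theorem. For $p \mid m$ and $p$ odd, admissibility of $\calA_{a,m}$ together with the Local Square Theorem already forces any $c \equiv a \pmod{m}$ into the squareclass $a(\Z_p^\times)^2 \subseteq Q(L_p)$. For $p = 2 \mid m$, the Local Square Theorem instead requires congruence modulo $2^{\ord_2 a + 3}$, so I insist that $2^{\ord_2 a + 3} \mid m^*$. For primes $p \mid dL$ with $p \nmid m$, I invoke the fact from Section~3 that $p^{r(p)} \Z_p \subseteq Q(L_p)$: I prescribe $\ord_p a^* = r(p)$ and include $p^{r(p)+1}$ as a factor of $m^*$, which both secures $c \in Q(L_p)$ for every $c \in \calA_{a^*, m^*}$ and preserves admissibility at $p$. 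By CRT these conditions are jointly realizable, and since $Q(L_p) = \Z_p$ at every $p \nmid dL$, the resulting $\calA_{a^*, m^*}$ is admissible and every one of its elements lies in $Q(L_p)$ for every prime $p$, i.e., $\calA_{a^*, m^*} \subseteq Q(\gen L)$. Lemma 1 then produces an element of $\calA_{a^*, m^*} \cap Q(L) \subseteq \calA_{a, m} \cap Q(L)$, contradicting $\calA_{a,m} \subseteq \calE(L)$.

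The main technical obstacle is the mismatch at $p = 2$ between admissibility (which gives only $\ord_2 m \geq \ord_2 a + 1$) and the Local Square Theorem (which needs $\ord_2 m \geq \ord_2 a + 3$ to force two units into the same squareclass over $\Z_2$). This is precisely why one cannot work with $\calA_{a,m}$ itself but must pass to a proper subprogression; a secondary bookkeeping point is that at primes $p \mid dL$ with $p \nmid m$ one must take care to prescribe $\ord_p a^* < \ord_p m^*$, which is why we set $\ord_p a^* = r(p)$ and $\ord_p m^* = r(p)+1$ exactly.
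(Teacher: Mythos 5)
Your proposal is correct and follows essentially the same route as the paper's proof: refine $\calA_{a,m}$ via the Chinese Remainder Theorem and the Local Square Theorem to a subprogression contained in $Q(\gen L)$ and then invoke Lemma 1 to contradict $\calA_{a,m}\subseteq \calE(L)$, the only (harmless) variation being that at the primes $q\mid dL$ with $q\nmid m$ you use $q^{r(q)}\Z_q\subseteq Q(L_q)$ and prescribe $\ord_q a^*=r(q)$, whereas the paper fixes represented elements $\alpha_q$ (resp.\ $2^{\delta}\alpha_2$) and forces $\lambda$ into their squareclasses. One detail to state explicitly: at $p=2\mid m$ it is not enough to demand $2^{\ord_2 a+3}\mid m^*$; you must also choose $a^*\equiv a\pmod{2^{\ord_2 a+3}}$ (compatible with $a^*\equiv a\pmod m$ by CRT), since otherwise the elements of $\calA_{a^*,m^*}$, while constant modulo $2^{\ord_2 a+3}$, could lie in a $2$-adic squareclass of the same order as $a$ that is not known to be represented by $L_2$ — this is exactly why the paper imposes $\lambda\equiv a\pmod{4m}$ when $m$ is even.
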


\begin{proof}

On the contrary, suppose that $a \in Q(L_p)$ for all primes $p \mid m$. Let $q$ be a prime such that $q\mid 2dL$ but $q\nmid m$. By the assumption that the scale of $L$ is $\Z$, when $q$ is odd there exists an integer $\alpha_q \in Q(L_q)$ such that $q\nmid \alpha_q$; when $q=2$, there exists an odd integer $\alpha_2$ such that $2^{\delta}\alpha_2 \in Q(L_2)$ for either $\delta = 0$ or $\delta =1$. By the Chinese Remainder Theorem, there is a smallest positive integer $\lambda$ such that
    \[
    \begin{cases}
        \lambda \equiv a \,(\text{mod}\,m) \text{ if $m$ is odd, or }\lambda \equiv a \,(\text{mod}\,4m) \text{ if $m$ is even},\\
        \lambda \equiv \alpha_q \,(\text{mod}\,q) \text{ for all odd primes } q \mid dL, q \nmid m,\\
        \lambda \equiv 2^{\delta}\alpha_2 \,(\text{mod}\,2^{\delta +3})  \text{ if } $m$ \text{ is odd}.
    \end{cases}
    \]
For an odd prime $p\mid m$, we then have $\lambda \equiv p^{\ord_pa}a_0 \pmod{p^{\ord_pm}}$ with $p\nmid a_0$ (or $\lambda \equiv 2^{\ord_2a}a_0 \pmod{2^{\ord_2m+2}}$ if $p=2\mid m$). Since the arithmetic progression $\calA_{a,m}$ is assumed to be admissible, $\ord_pa<\ord_pm$ for all primes $p\mid m$ and it thus follows from the Local Square Theorem that $\lambda \in a(\Z_p^{\times})^2$. Therefore $\lambda \in Q(L_p)$, since $a\in Q(L_p)$. The Local Square Theorem also ensures that $\lambda \in Q(L_q)$ for any $q\mid 2dL$ for which $q\nmid m$. Hence we see that $\lambda \in Q(L_p)$ for all primes $p$; that is, $\lambda \in Q(\gen L)$.

Now consider the arithmetic progression $\calA_{\lambda,M}$, where
     \[
    M = 4m \prod_{\begin{tiny} \begin{array}{c} q \mid dL \\ q \nmid m \end{array} \end{tiny}} q.
    \]
Any element $\mu$ of $\calA_{a,m}$ is congruent to $\lambda$ modulo a sufficiently high power of each prime $p\mid dL$ to guarantee that $\mu \in Q(L_p)$; hence, $\calA_{\lambda,M} \subseteq Q(\gen L)$. So by Lemma 1 there exists $\hat{\mu} \in \calA_{\lambda,M}$ such that $\hat{\mu} \in Q(L)$. But this is impossible since $\calA_{\lambda,M} \subseteq \calA_{a,m} \subseteq \calE(L)$.
\end{proof}
We are now in a position to state and prove the main result of this section.

\begin{Prop}
Let $L$ be a positive definite lattice with rank $L \geq 4$. Then
$\calE(L)$ is a finite union of admissible arithmetic progressions if and only if $L$ is regular.
\end{Prop}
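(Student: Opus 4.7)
The implication from regularity is immediate: if $L$ is regular then $\calE(L)=\calE(\gen L)$, and Proposition~1 already guarantees that this set is a finite union of admissible arithmetic progressions. For the converse, write $\calE(L) = \calA_{a_1,m_1} \cup \cdots \cup \calA_{a_k,m_k}$ as a finite union of admissible progressions. Since $\calE(\gen L) \subseteq \calE(L)$ always holds, regularity of $L$ will follow once I show each $\calA_{a_i,m_i} \subseteq \calE(\gen L)$; equivalently, that every element $n$ of each progression fails to be represented by $L_p$ for some prime $p$.

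The natural move is to apply Lemma~2 to $\calA_{a_i,m_i} \subseteq \calE(L)$, obtaining a prime $p \mid m_i$ with $a_i \notin Q(L_p)$, and then to propagate the exclusion from $a_i$ to every $n$ in the progression using the elementary fact that $Q(L_p)$ is closed under multiplication by squares of units, and hence is a union of $\Z_p$-squareclasses. For odd $p$, admissibility $\ord_p a_i < \ord_p m_i$ combined with $n \equiv a_i \pmod{p^{\ord_p m_i}}$ forces $n$ and $a_i$ into a common $\Z_p^\times$-squareclass via the Local Square Theorem, giving $n \notin Q(L_p)$.

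The main obstacle is the prime $p=2$: admissibility only provides $\ord_2 m_i - \ord_2 a_i \geq 1$, whereas identifying a $\Z_2^\times$-squareclass requires a residue modulo $8$. I will sidestep this by refining. Set $M_i = \operatorname{lcm}(m_i,\,2^{\ord_2 a_i + 3})$ and partition $\calA_{a_i,m_i}$ into its finitely many sub-progressions $\calA_{a',M_i}$. A short verification shows each such sub-progression is again admissible: the odd parts of $m_i$ and $M_i$ agree, so nothing changes at odd primes $p \mid m_i$, while at $p=2$ we have $\ord_2 a' = \ord_2 a_i$ (since $a' \equiv a_i \pmod{m_i}$ and admissibility forces $\ord_2 m_i > \ord_2 a_i$), so $\ord_2 a' < \ord_2 a_i + 3 \leq \ord_2 M_i$. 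Because each sub-progression is contained in $\calE(L)$, Lemma~2 supplies a prime $p' \mid M_i$ with $a' \notin Q(L_{p'})$; and by the very choice of $M_i$ the Local Square Theorem now applies uniformly (for odd $p'$ from admissibility, and for $p'=2$ because $\ord_2 M_i - \ord_2 a' \geq 3$), placing every $n$ in the sub-progression in the same $\Z_{p'}$-squareclass as $a'$ and hence in $\calE(\gen L)$. Re-uniting the sub-progressions yields $\calA_{a_i,m_i} \subseteq \calE(\gen L)$, which completes the proof.
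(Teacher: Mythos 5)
Your argument is in substance the same as the paper's: both proofs settle the converse by combining Lemma 2 with the Local Square Theorem, and both handle the prime $2$ by refining each progression until the gap between $\ord_2$ of the modulus and $\ord_2$ of the initial term is at least $3$ (the paper does this by repeatedly splitting $\calA_{a,m}=\calA_{a,2m}\cup\calA_{a+m,2m}$ when $2\mid m$; you do it by passing to the modulus $M_i=\operatorname{lcm}(m_i,\,2^{\ord_2 a_i+3})$). The one flaw is your blanket claim that every sub-progression $\calA_{a',M_i}$ is again admissible: the justification rests on ``admissibility forces $\ord_2 m_i>\ord_2 a_i$,'' which is true only when $2\mid m_i$. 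When $m_i$ is odd the claim can fail; for instance the admissible progression $\calA_{2,3}$ has $M_i=48$ and contains the sub-progression $\calA_{32,48}$, which is not admissible since $\ord_2 32=5\geq 4=\ord_2 48$ (and $\ord_2 a'\neq\ord_2 a_i$ there), so Lemma 2 cannot be invoked for such a sub-progression as stated. The gap is harmless and immediately repaired: when $2\nmid m_i$, Lemma 2 applied to the original progression necessarily yields an odd prime $p\mid m_i$, and your unrefined ``natural move'' already places all of $\calA_{a_i,m_i}$ in a single squareclass in $\calE(L_p)$, hence in $\calE(\gen L)$; so the refinement should simply be restricted to progressions with even modulus, which is exactly how the paper proceeds.
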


\begin{proof}
It remains to prove only the forward implication. So assume $\calE(L)$ is a finite union $\calU$ of admissible arithmetic progressions.
If $2 \mid m$ for some $\calA_{a,m}\subseteq \calU$, note that $\calA_{a,m} = \calA_{a,2m} \cup \calA_{a+m,2m}$.
Since $\calA_{a,m}$ is admissible, we have $\ord_2 (a+m) = \ord_2 a$.
By repeating this process as necessary, we may assume that
    \[
    \ord_2 m - \ord_2 a \geq 3
    \]
for all arithmetic progressions $\calA_{a,m} \subseteq \calU$.
Now suppose that $\lambda \in \calE(L)$. So there exists an arithmetic progression $\calA_{a,m}\subseteq \calU$ such that $\lambda \in \calA_{a,m}$.
Since $\calA_{a,m} \subseteq \calE(L)$, there exists a prime $p \mid m$ such that $a \in \calE(L_p)$ by Lemma 2.
For this prime $p$, there exist integers $a_0, m_0, x$ with $p \nmid a_0m_0$ such that
    \[
    \lambda
    = a + mx
    = p^{\ord_p a} \left( a_0 + p^{\ord_p m - \ord_p a} m_0 x \right).
    \]
Since ${\ord_p m - \ord_p a} \geq 1$ if $p$ is odd and $\geq 3$ if $p=2$, it follows from the Local Square Theorem that there exists $\xi \in \Z_p^{\times}$ such that $\lambda = a\xi^2$. It follows that $\lambda \in \calE(L_p)$.
Hence $\lambda \in \calE(\gen L)$. This establishes that $\calE(L) \subseteq \calE(\gen L)$.
It follows that $\calE(L) = \calE(\gen L)$ and $L$ is regular.
\end{proof}

\section{Diagonal quaternary lattices}

In this section, we return to the special case of diagonal quaternary lattices and prove the results stated in the abstract.

\begin{Prop}
For each $r \geq 1$, $L(r)\cong \qf{1,3,9,9^r}$ avoids exactly $r+1$ arithmetic progressions.
\end{Prop}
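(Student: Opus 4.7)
The plan is to proceed in three stages: (i) determine $\calE(\gen L(r))$ via local analysis at $p=3$, the only prime dividing $dL(r) = 3^{2r+3}$; (ii) prove $L(r)$ is regular, so that $\calE(L(r)) = \calE(\gen L(r))$; and (iii) verify that no decomposition of $\calE(L(r))$ into admissible arithmetic progressions uses fewer than $r+1$ terms.

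For (i), since $L(r)_p$ is unimodular for every prime $p \neq 3$, only the squareclasses of $\Z_3$ missed by $L(r)_3 = \qf{1,3,9,9^r}$ contribute to $\calE(\gen L(r))$. To identify those, I would work through the 3-adic valuations $v = \ord_3 n$ in turn. For $n$ of valuation $v$, the equation $n = x^2 + 3y^2 + 9z^2 + 9^r w^2$ forces $3 \mid x$, then $3 \mid y$, and potentially further divisibilities; successive substitutions $x \to 3x'$, $y \to 3y'$, and so on reduce the question to the representability of a unit by a smaller auxiliary form. The expected pattern is that at $v=0$ and at each odd $v \in \{1, 3, \ldots, 2r-1\}$ the reduction terminates in a condition $u^2 \equiv m \pmod{3}$, so only the squareclass of $1$ is represented; at each even $v \in \{2, 4, \ldots, 2r-2\}$ the reduced form contains a $\qf{1,1}$ sublattice, which represents every unit of $\Z_3$; and at $v \geq 2r$ every squareclass is represented since $3^{2r}\Z_3 \subseteq Q(L(r)_3)$ by the result preceding Proposition~1. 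This yields exactly $r+1$ excluded squareclasses, corresponding to the admissible progressions $\calA_{2,3}$, $\calA_{6,9}$, $\calA_{54,81}, \ldots, \calA_{2\cdot 3^{2r-1},\,3^{2r}}$.

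Step (ii) is where I expect the main difficulty. To show $L(r)$ is regular, I would first note that $L(r)$ is isotropic at every prime: unimodular of rank $4$ at $p \neq 3$, and over $\Q_3$ similar to the isotropic form $\qf{1,1,1,3}$. The theorem of Tartakowsky cited in the remark after Lemma~1 then yields that $Q(\gen L(r)) \setminus Q(L(r))$ is finite. To upgrade this to full regularity, one can either verify by hand that no small exceptions occur, or give a structural argument such as showing that $\gen L(r)$ consists of a single class (via a mass-formula or neighbor-lattice computation), from which $Q(L(r)) = Q(\gen L(r))$ follows immediately.

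Finally, for (iii), suppose $\calE(L(r)) = \bigcup_{i=1}^s \calA_{a_i,m_i}$ as a union of $s$ admissible arithmetic progressions. If $3 \nmid m_i$ for some $i$, then $\calA_{a_i,m_i}$ meets every residue class modulo arbitrary powers of $3$ and in particular contains integers of 3-adic valuation $2$; but by steps (i) and (ii), every such integer is represented by $L(r)$, contradicting $\calA_{a_i,m_i} \subseteq \calE(L(r))$. Hence $3 \mid m_i$ for each $i$, and admissibility then forces each progression to have constant 3-adic valuation $\ord_3 a_i$. Since $\calE(L(r))$ contains elements of the $r+1$ distinct 3-adic valuations $0, 1, 3, \ldots, 2r-1$, at least $r+1$ progressions are required, and equality is realized by the decomposition from step (i).
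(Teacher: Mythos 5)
Your stages (i) and (iii) are essentially fine: the $3$-adic squareclass analysis does yield exactly the $r+1$ progressions $\calA_{2,3},\calA_{6,9},\ldots,\calA_{2\cdot 3^{2r-1},3^{2r}}$, and your lower-bound argument (each admissible progression inside $\calE(L(r))$ must have $3\mid m_i$, hence constant $3$-adic valuation $\ord_3 a_i$, while $\calE(L(r))\supseteq \calE(\gen L(r))$ contains elements of the $r+1$ valuations $0,1,3,\ldots,2r-1$) is a clean route to minimality, arguably tidier than the paper's step-by-step containment argument. But both (i)'s relevance to $\calE(L(r))$ and (iii) as you phrase it are conditional on stage (ii), and stage (ii) is a genuine gap: you do not prove regularity, you only name two candidate strategies, and neither works as stated. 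Tartakowsky's theorem applies (since $L(r)$ is isotropic over every $\Q_p$) and gives finiteness of $Q(\gen L(r))\setminus Q(L(r))$, but with no explicit bound, so ``verify by hand that no small exceptions occur'' is not an argument --- and you would need such a verification uniformly in $r$, for an infinite family. The fallback of showing $\gen L(r)$ has a single class is not viable either: $dL(r)=3^{2r+3}$ grows while the successive minima stay bounded, so the class number of $L(r)$ grows with $r$; regularity here cannot be reduced to class number one.

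The paper closes exactly this gap by an elementary device absent from your outline: the ternary section $T\cong\qf{1,3,9}$ has class number one, so $\calE(T)=\{3\ell+2:\ell\geq 0\}\cup\{9^k(9t+6):k,t\geq 0\}$ is classically known, and the local computation gives $Q(\gen L(r))=Q(T)\cup\{9^k(9t+6):t\geq 0,\ k\geq r\}$. For $\lambda=9^k(9t+6)$ with $k\geq r$ one checks directly that $\lambda-9^r\notin\calE(T)$, so $\lambda-9^r=Q(w)$ for some $w\in T$, and then $\lambda=Q(w+v_4)\in Q(L(r))$, where $v_4$ is the basis vector with $Q(v_4)=9^r$. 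This proves $Q(L(r))=Q(\gen L(r))$ with no appeal to Tartakowsky, effective bounds, or class numbers; some explicit representation argument of this kind (or an effective substitute) is what your stage (ii) still needs before (iii) can be quoted.
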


\begin{proof}
Let $T \cong \qf{1,3,9}$.
Since $T$ has class number $1$, it was known already to Dickson that
    \[
    \calE(T) = \{ 3\ell + 2 : \ell \geq 0 \} \cup \{ 9^k(9t+6) : k, t \geq 0 \}
    \]
\cite[Chapter V]{leD-39}.
Let $\{ v_1, v_2, v_3, v_4 \}$ be a basis for the lattice $L(r)$ with respect to which the Gram matrix is $\qf{1,3,9,9^r}$, and let
    \[
    K = \Z(3^r v_1) + \Z(3^{r-1} v_3) + \Z v_4 \cong 9^r \qf{1,1,1}.
    \]
So $Q(K_3) \supseteq 9^r \Z_3$, since $\qf{1,1,1}$ is universal over $\Z_3$.
Hence
    \[
    Q(L(r)_3) \supseteq 9^r \Z_3.
    \]
Also $Q(L(r)_p) = \Z_p$ for all primes $p \not=3$.
Using this local information, it can be shown that
    \begin{align*}
    \calE(\gen L(r))
    &= \{ 3\ell + 2 : \ell \geq 0 \} \cup \{ 9^k(9t+6) : t \geq 0, 0 \leq k \leq (r-1) \}\\
    &= \calA_{2,3} \cup \left( \cup_{i=1}^{r} \calA_{2 \cdot 3^{2i-1}, 3^{2i}} \right)\\
    &=\calE(T)\setminus \{9^k(9t+6):t\geq 0, k\geq r\}.
    \end{align*}
So
    \[
    Q(\gen L(r))= Q(T) \cup \{9^k(9t+6):t\geq 0, k\geq r\}.
    \]
Now suppose $\lambda \in Q(\gen L(r))$.
Then either $\lambda \in Q(T)$ or $\lambda = 9^k (9t + 6)$, $k \geq r$, $t \geq 0$.
In the latter case,
    \[
    \lambda - 9^r = 9^r ( 9^{k+1-r} t + 9^{k-r} \cdot 6 - 1 ) \not\in \calE(T).
    \]
So there exists a vector $w \in T$ such that $Q(w) = \lambda - 9^r$.
Hence $\lambda = Q(w + v_4) \in Q( L(r) )$.
We conclude that $\Q(\gen L(r))=Q(L(r))$ and
    \[
    \calE( L(r) )
    = \calA_{2,3} \cup \left( \cup_{i=1}^{r} \calA_{2 \cdot 3^{2i-1}, 3^{2i}} \right).
    \]
Thus, $\calE(L(r))$ can be written as a union of $r+1$ admissible arithmetic progressions.

It remains to show that $\calE(L(r))$ cannot be written as a union of fewer than $r+1$ admissible progressions.
So suppose
    \[
    \calE( L(r) )
    = \calA_{a_1, m_1} \cup \cdots \cup  \calA_{a_t, m_t}
    \]
for some $t \leq r-1$.
Since $2 \in \calE( L(r) )$, we may assume, by re-indexing if necessary, that $2 \in \calA_{a_1, m_1}$.
Since $\calA_{a_1, m_1} \subseteq \calE( L(r) )$, there is a prime $p \mid m_1$ such that $a_1 \in \calE( L(r)_p )$.
As $L(r)_p$ is universal over $\Z_p$ for all $p \not=3$, it follows that $3 \mid m_1$.
So
    \[
    \calA_{a_1,m_1} \subseteq \calA_{a_1,3} = \calA_{2,3}.
    \]
Next note that $6 \in \calE( L(r) )$ and $6 \not\in \calA_{2,3}$; hence $6 \not\in \calA_{a_1,m_1}$, and we may assume, again by re-indexing if necessary, that $6 \in \calA_{a_2,m_2}$.
It follows from $\calA_{a_2,m_2} \subseteq \calE( L(r) )$ that $3 \mid m_2$.
Moreover, $\calA_{a_2, m_2}$ is admissible, so $9 \mid m_2$.
So
    \[
    \calA_{a_2,m_2} \subseteq \calA_{a_2,9} = \calA_{6,9}.
    \]
Continuing in this way, it can be seen that each $\calA_{a_j, m_j}$ must be contained in one of the sets
    \[
    \calA_{2,3}, \hspace{3ex} \calA_{2 \cdot 3^{2i-1}, 3^{2i}}, \hspace{2ex} 1 \leq i \leq (r-1).
    \]
But then the element $2 \cdot 3^{2r-1}$ cannot be contained in any of the sets $\calA_{a_j, m_j}$, $1 \leq j \leq t$, which is a contradiction.
\end{proof}
\noindent\textit{Remark:} In the preceding proof, it is proven that the lattices $L(r)$ are in fact regular. This is also proven in \cite{bmK-pre}, but we have chosen to include the argument here for the sake of completeness.
\smallskip

To conclude the paper, we will determine all of the diagonal quaternary lattices for which the excluded set consists of exactly one arithmetic progression. According to Proposition 2, all such lattices must appear in the list of regular diagonal quaternary lattices determined in \cite{bmK-pre}. This list contains 106 individual lattices and 180 infinite families, arranged so that all those having the same first three coefficients are grouped together. In the following example, we analyze the regular lattices having leading ternary sublattice $\qf{1,1,1}$.

\bigskip

\noindent \textit{Example:} Consider the regular lattices of the type $\qf{1,1,1,d}$. According to \cite{bmK-pre}, the lattices of this type which are regular are:
    \[
    \qf{1,1,1,d}, \text{ with } d = 1,3,5,7 \text{ or } d=x\cdot 2^{2r+1}, \text{ where } r\in \N' \text{ and } x=1,2,3.
    \]
Among this list, the lattices with $d=1,2,3,4,5,6,7$ are known to be universal \cite{mB} and hence have empty excluded set. Also, for the leading ternary sublattice $T=\qf{1,1,1}$, it is a classical result that the excluded set $\calE(T)$ consists of the integers of the type $4^k(8n+7)$ for $k,n\in \N \cup \{0\}$. In particular, $23, 28 \in \calE(T)$. Suppose it were the case that $\calE(L)=\calA_{a,m}$ for some $L\cong \qf{1,1,1,d}$ with $d>28$. Since $23,28 < d$, it follows that $23,28 \in \calE(L)$. Since the difference between $28$ and $23$ is $5$, $m$ would have to be $5$. But this leads to a contradiction since $7\in \calE(L)$ but $12=7+5\in Q(T)\subseteq Q(L)$. So we see that $\calE(L)\neq \calA_{a,m}$ whenever $d=x\cdot 2^{2r+1}$ with $r\geq 2$. So we are left with the three lattices $\qf{1,1,1,d}$ with $d=8,16,24$. In each case, the lattice contains a sublattice that represents all multiples of $4$. For example, $\qf{1,1,1,24}$ contains a sublattice isometric to $4\qf{1,1,1,6}$; since $\qf{1,1,1,6}$ is universal, this yields the result. So in these three cases, the excluded set is precisely $\calA_{7,8}$.

\bigskip

Using similar arguments, an analysis of all the regular diagonal quaternary lattices leads to the following result:

\begin{Prop}
There are exactly 51 diagonal positive definite quaternary lattices $L\cong\qf{a,b,c,d}$ with $1\leq a \leq b \leq c \leq d$ which avoid exactly one arithmetic progression. The lattices and the excluded arithmetic progressions $\calA_{a,m}$ are given in the following list:
{\allowdisplaybreaks
\begin{align*}
\qf{1,1,1,d},  &~~d = 2, 4, 6,                  & \calA_{7,8}\\
\qf{1,1,2,d},  &~~d = 16, 32, 48,               & \calA_{14,16}\\
\qf{1,1,3,d},  &~~d = 9, 18, 27, 36, 45, 54,    & \calA_{6,9}\\
\qf{1,1,4,d},  &~~d = 4, 8, 12, 16, 20, 24, 28, & \calA_{3,4}\\
\qf{1,1,5,8}\,\,  &                                & \calA_{3,8}\\
\qf{1,1,6,d},  &~~d = 9, 18, 17,                & \calA_{3,9}\\
\qf{1,2,2,d},  &~~d = 8, 16, 24,                & \calA_{7,8}\\
\qf{1,2,3,d},  &~~d = 4, 16, 32,                & \calA_{10,16}\\
\qf{1,2,4,d},  &~~d = 16, 32, 48,               & \calA_{14,16}\\
\qf{1,3,3,d},  &~~d = 3, 6, 9, 12, 15, 18,      & \calA_{2,3}\\
\qf{1,3,6,d},  &~~d = 6, 9, 12, 15, 18, 21, 24, 27, 30, & \calA_{2,3}\\
\qf{2,3,3,d},  &~~d = 3, 6, 9,                  & \calA_{1,3}\\
\qf{2,3,6,6}\,\,  &                                & \calA_{1,3}
\end{align*}
}
\end{Prop}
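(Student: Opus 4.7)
The plan is to combine Proposition 2 with the classification of regular diagonal quaternary lattices in \cite{bmK-pre}. By Proposition 2, any lattice whose excluded set is a finite union of admissible arithmetic progressions must be regular, so the candidates for $L$ are precisely the $106$ individual lattices and $180$ infinite families enumerated in \cite{bmK-pre}. The task then reduces, case by case, to deciding for each such $L \cong \qf{a,b,c,d}$ whether $\calE(L)$ is a single admissible arithmetic progression.

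For each candidate, I would follow the template in the preceding Example. Let $T \cong \qf{a,b,c}$ denote the leading ternary sublattice. Since a representation $n = Q(x_1 v_1 + \cdots + x_4 v_4)$ with $n < d$ forces $x_4 = 0$, we have the key identity
\[
\calE(L) \cap [1, d-1] = \calE(T) \cap [1, d-1].
\]
The ternary sublattices that occur in \cite{bmK-pre} are all of class number one (or otherwise have classically known exceptional sets), so $\calE(T)$ is explicitly available. If $\calE(T) \cap [1, d-1]$ contains two integers $n_1 < n_2$ that cannot be placed in a common admissible arithmetic progression $\calA_{a,m}$ (the effective test, as in the Example, is to check the divisors $m \mid (n_2 - n_1)$ and verify either that admissibility fails or that some integer in $\calA_{n_1, m} \cap [n_1, n_2]$ already belongs to $Q(T) \subseteq Q(L)$), then $L$ is immediately eliminated. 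This quickly rules out all but finitely many members of each infinite family.

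For the finitely many small values of $d$ that survive, together with the $106$ individual regular lattices, I would compute $\calE(L) = \calE(\gen L)$ directly from the Jordan splittings of $L_p$ at the primes $p \mid dL$ via the squareclass analysis of Section~3, and check whether the resulting finite union of admissible arithmetic progressions collapses to a single one. Where it does, the positive direction is typically confirmed by exhibiting a scaled sublattice of a known universal form, as in the Example where $\qf{1,1,1,24}$ contains $4\qf{1,1,1,6}$ and so represents every multiple of $4$.

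The main obstacle is purely organizational: executing this essentially uniform analysis across all $286$ entries of the Kim classification. No individual case requires new ideas beyond the Example, but the precise list of $51$ surviving lattices, grouped in the statement by the leading ternary sublattice $\qf{a,b,c}$, only emerges after a complete and careful sweep.
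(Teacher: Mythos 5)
Your proposal is correct and follows essentially the same route as the paper: invoke Proposition 2 to reduce to the regular lattices in Kim's classification, then run the Example's template (small excluded values of the leading ternary to eliminate large $d$ and pin down $m$, local computation of $\calE(\gen L)$ and scaled universal sublattices for the finitely many survivors) over all entries. The paper itself only sketches this sweep with the words ``using similar arguments,'' so your plan matches its level of detail and its method.
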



\end{document}